\font\SmallCap=cmcsc10
\numberwithin{equation}{section}
\title{\bf{Criteria for Invariance of Convex Bodies for Linear Parabolic Systems}}
\author{Gershon Kresin and  Vladimir Maz'ya\\ \\
{\it Dedicated to David Shoikhet on his 60th birthday}}
\date\ }
\numberwithin{equation}{section}
\newtheorem{lemma}{Lemma}[section]
\newtheorem{theorem}{Theorem}[section]
\newtheorem{proposition}[theorem]{Proposition}
\newtheorem{corollary}{Corollary}[section]
\newcommand{\bs}{\boldsymbol}
\begin{document}
\maketitle
{\bf Abstract.}
We consider systems of linear partial differential equations, 
which contain only second and first derivatives in the $x$ variables 
and which are uniformly parabolic in the sense of Petrovski\v{\i} in 
the layer ${\mathbb R}^n\times [0,T]$. For such systems we obtain 
necessary and, separately, sufficient conditions for invariance of a 
convex body. These necessary and sufficient conditions coincide if 
the coefficients of the system do not depend on $t$. The above 
mentioned criterion is formulated as an algebraic condition describing
a relation between the geometry of the invariant convex body and 
coefficients of the system. The criterion is concretized 
for certain classes of invariant convex sets:
polyhedral angles, cylindrical and conical bodies. 
\\
\\
{\bf 2010 MSC.} Primary: 35K45; Secondary: 35B50
\\
\\ 
{\bf Keywords:} criteria for invariance, invariant convex bodies, 
linear parabolic systems

\section{Main results and background}
We consider the Cauchy problem for parabolic systems of the form
\begin{equation} \label{(0.1)}
{\partial \bs u\over\partial t}-\sum ^n_{j,k=1}{\mathcal
A}_{jk}(x,t){\partial ^2\bs u\over\partial x_j\partial x_k}-\sum ^n_{j=1}{\mathcal
A}_j(x,t){\partial\bs u\over\partial x_j}=\bs 0,
\end{equation}
where $\bs u =(u_1,\dots, u_m)$ and $(x, t)\in {\mathbb R}_T^{n+1}={\mathbb R}^n\times (0,T]$.

By ${\mathfrak S}$ we denote the closure of an arbitrary convex  
proper subdomain of ${\mathbb R}^m$.
We say that ${\mathfrak S}$ is invariant 
for system (\ref{(0.1)}) in ${\mathbb R}_T^{n+1}$ if any solution $\bs u$ of (\ref{(0.1)}), 
which is continuous and bounded in $\overline{{\mathbb R}_T^{n+1}}$,  
belongs to ${\mathfrak S}$ 
under the assumption that $\bs u(\cdot, 0)\in {\mathfrak S}$. 
Note that the classical maximum modulus principle and the componentwise 
maximum principle for parabolic and elliptic systems can be obviously interpreted
as statements on the invariance of a ball and an orthant, respectively. 

In the present paper we are interested in algebraic conditions on the coefficients 
${\mathcal A}_{jk}, {\mathcal A}_j$ ensuring the invariance of an arbitrary 
convex ${\mathfrak S}$.

The notion of invariant set for parabolic and elliptic 
systems and the first results concerning these sets appeared in the paper 
by Weinberger \cite{WEIN}.
Nowadays, there exists a large literature on invariant sets
for nonlinear parabolic and elliptic systems with principal part subjected
to various structural conditions such as scalarity, diagonality
and others (see, for example, Alikakos \cite{ALIK1,ALIK},
Amann \cite{AM}, Bates \cite{Bates}, Bebernes and Schmitt \cite{BS},
Bebernes, Chueh and Fulks \cite{BCF}, 
Chueh, Conley and Smoller \cite{CCS}, 
Conway, Hoff and Smoller \cite{CHS},
Cosner and Schaefer \cite{CS},
Kuiper \cite{Kuip}, Lemmert \cite{Lem}, 
Redheffer and Walter \cite{RW1,RW2},
 Schaefer \cite{Shaef}, Smoller \cite{Smoller},
Weinberger \cite{WEIN1} and references there).

We note that maximum principles for weakly 
coupled parabolic systems are discussed in the books by Protter and Weinberger \cite{PW},
and Walter \cite{WAL} which also contain rich bibliographies on this subject.
The criteria on validity of the componentwise maximum principle 
for linear parabolic system of the general form
in ${\mathbb R}_T^{n+1}$ were obtained in the paper by Otsuka \cite{Ots}. 
In our papers \cite{KM1}-\cite{KM3} and \cite{MK} (see also monograph \cite{KM} 
and references therein) the criteria for validity of other type 
of maximum principles for parabolic systems were established, which are interpreted as
conditions for the invariance of compact convex bodies.

Henceforth we assume:
 
(i) real $(m\times m)$-matrix-valued functions 
${\mathcal A}_{jk}={\mathcal A}_{kj}$ and 
${\mathcal A}_j$ are defined in $\overline {{\mathbb R}_T^{n+1}}$ and have continuous and
bounded derivatives in $x$ up to the second and first order, respectively, which satisfy 
the uniform H\"older condition on $\overline {{\mathbb R}_T^{n+1}}$
with exponent $\alpha \in(0, 1]$ with respect
to the parabolic distance $\big (|x-x'|^2+|t-t'|\big )^{1/2}$;

(ii) system (\ref{(0.1)}) is uniformly parabolic in the sense of Petrovski\v{\i} 
in $\overline {{\mathbb R}_T^{n+1}}$, i.e., for any point $(x,t)\in\overline 
{{\mathbb R}_T^{n+1}}$, the real parts of the $\lambda $-roots of the equation 
$
\hbox {det}\left (\sum ^n_{j,k=1}{\mathcal A}_{jk}(x,t)\sigma _j\sigma
_k+\lambda I\right )=0
$
satisfy the inequality ${\rm Re}\; \lambda (x,t,\bs\sigma )\leq -\delta
|\bs\sigma |^2$, where $\delta $=const $>0$, for any $\bs\sigma =(\sigma
_1,\dots ,\sigma _n)\in {\mathbb R}^n,\;I$ is the identity matrix of order $m$,
and $|\cdot |$ is the Euclidean length of a vector.

The main result of the paper is the following assertion.

\smallskip
{\SmallCap Theorem.} 
(i) {\it Let the unit outward normal $\bs\nu(a)$ to $\partial{\mathfrak S}$ 
at any point $a\in \partial{\mathfrak S}$ for which it exists,
is an eigenvector of all matrices ${\mathcal A}^*_{jk}(x, t)$, ${\mathcal A}^*_j(x, t)$, 
$1\leq j,k\leq n,\;(x, t)\in {\mathbb R}^{n+1}_T$. Then ${\mathfrak S}$ is 
invariant for system $(\ref{(0.1)})$ 
in ${\mathbb R}^{n+1}_T$. Here and henceforth $^*$ means passage to the 
transposed matrix.}

(ii) {\it  Let  ${\mathfrak S}$ be invariant for system $(\ref{(0.1)})$ 
in ${\mathbb R}_T^{n+1}$. Then the unit outward normal $\bs\nu(a)$ to 
$\partial{\mathfrak S}$ at any point $a\in \partial{\mathfrak S}$ for
which it exists, is an eigenvector of all matrices ${\mathcal A}^*_{jk}(x, 0)$, 
${\mathcal A}^*_j(x, 0)$, $1\leq j,k\leq n,\;x\in {\mathbb R}^n$.}

\smallskip
We note that this result was obtained in our paper \cite{KM3} for the case
of a compact ${\mathfrak S}$ and ${\mathcal A}_j=0, 1\leq j\leq n$.
 
If the coefficients of the system do not depend on $t$, 
the theorem just formulated contains the following exhaustive criterion
of the invariance of ${\mathfrak S}$. 

\smallskip
{\SmallCap Corollary.}
{\it A convex body ${\mathfrak S}$ is invariant for parabolic system  
\begin{equation} \label{(0.1X)}
{\partial \bs u\over\partial t}-\sum ^n_{j,k=1}{\mathcal
A}_{jk}(x){\partial ^2\bs u\over\partial x_j\partial x_k}-\sum ^n_{j=1}{\mathcal
A}_j(x){\partial\bs u\over\partial x_j}=\bs 0
\end{equation}
in ${\mathbb R}_T^{n+1}$ if and only if the unit outward normal $\bs\nu(a)$ to 
$\partial{\mathfrak S}$ at any point $a\in \partial{\mathfrak S}$ for which it exists,
is an eigenvector of all matrices ${\mathcal A}^*_{jk}(x)$, ${\mathcal A}^*_j(x)$, 
$1\leq j,k\leq n,\;x\in {\mathbb R}^n$.}
   
\smallskip
We note that the conditions of smoothness of the coefficients 
of system (\ref{(0.1)}) in Theorem can be relaxed but we leave
this extension outside the scope of the present paper.

\section[Necessary conditions for invariance of a convex body]
{Necessary conditions for invariance of a convex body} 

By $[{\rm C}_{\rm b}(\overline{{\mathbb R}^{n+1}_T})]^m$
we denote the space of continuous and bounded $m$-component
vector-valued functions defined on $\overline{{\mathbb R}^{n+1}_T}$. 
By $[{\rm C}^{(2,1)}({\mathbb R}^{n+1}_T)]^m$ 
we mean the space of $m$-component vector-valued functions on ${\mathbb R}^{n+1}_T$ whose
derivatives with respect to $x$ up to the second order and first
derivative with respect to $t$ are continuous. 

\smallskip
Let $\bs\nu$ be a fixed $m$-dimensional unit vector, let 
$\bs a$ be a fixed $m$-dimensional vector, and let 
${\mathbb R}^m_{\bs \nu}(\bs a)=\{\bs u\in {\mathbb R}^m: (\bs u-\bs a, \bs \nu)\leq 0 \}$.

For the convex body ${\mathfrak S}$ by $\partial^*{\mathfrak S}$ we mean the 
set of points $a \in \partial{\mathfrak S}$ 
for which there exists the unit outward normal $\bs \nu(a)$ to $\partial{\mathfrak S}$.
We denote ${\mathfrak N}_{\mathfrak S}=\{\bs \nu(a): a \in \partial^*{\mathfrak S} \}$.

The next assertion contains a necessary condition for the invariance of a convex body
for parabolic system (\ref{(0.1)}) in ${\mathbb R}_T^{n+1}$. 

\setcounter{theorem}{0}
\begin{proposition} \label{P_1}
{\it Let a convex body ${\mathfrak S}$ 
be invariant for the system $(\ref{(0.1)})$
in ${\mathbb R}_T^{n+1}$. 
Then there exists a function $g: {\mathbb R}^{n+1}_T\times {\mathbb R}^n\times 
{\mathfrak N}_{\mathfrak S} \to {\mathbb R}$
such that
\begin{equation} \label{(3.1AB)}
G^*(t, 0 ,x, \eta )\bs \nu=g(t, x; \eta ;\bs \nu)\bs \nu\;,
\end{equation}
where $G(t, \tau ,x, \eta )$ is 
the fundamental matrix of solutions for system $(\ref{(0.1)})$.}
\end{proposition}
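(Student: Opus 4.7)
The plan is to exploit the integral representation $\bs u(x,t) = \int_{\mathbb R^n} G(t,0,x,\eta)\,\bs u_0(\eta)\,d\eta$ for the bounded classical solution of (\ref{(0.1)}) with datum $\bs u_0 \in [{\rm C}_{\rm b}(\overline{\mathbb R^{n+1}_T})]^m$, and to probe invariance by initial data concentrated near an arbitrary point $\eta_0 \in \mathbb R^n$. Fixing $\bs\nu \in \mathfrak N_{\mathfrak S}$ with associated boundary point $\bs a \in \partial^*\mathfrak S$, for each vector $\bs v \in \mathfrak S - \bs a$ I take a smooth bump $\chi_\rho$ with $0 \le \chi_\rho \le 1$ supported in the ball of radius $\rho$ about $\eta_0$, and form
\[
\bs u_0(\eta) := \bs a + \chi_\rho(\eta-\eta_0)\bs v .
\]
Convexity of $\mathfrak S$ guarantees that $\bs u_0$ is pointwise in $\mathfrak S$, and it is clearly continuous and bounded.

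Since the system carries no zero-order term, the constant $\bs u \equiv \bs a$ solves (\ref{(0.1)}), so uniqueness of bounded solutions forces $\int G(t,0,x,\eta)\bs a\,d\eta = \bs a$. Consequently the solution generated by the test datum equals
\[
\bs u(x,t) = \bs a + \int_{\mathbb R^n} \chi_\rho(\eta-\eta_0)\,G(t,0,x,\eta)\bs v\, d\eta .
\]
Invariance of $\mathfrak S$, together with the inclusion $\mathfrak S \subset \{\bs w:(\bs w-\bs a,\bs\nu)\le 0\}$, yields the scalar inequality
\[
\int_{\mathbb R^n} \chi_\rho(\eta-\eta_0)\,(\bs v,\, G^*(t,0,x,\eta)\bs\nu)\,d\eta \le 0 .
\]
Normalizing by $\int \chi_\rho$ and sending $\rho\to 0$, and invoking continuity of $G^*$ in $\eta$ provided by the classical parametrix construction under hypotheses (i)--(ii), I obtain the pointwise inequality $(\bs v,\, G^*(t,0,x,\eta_0)\bs\nu) \le 0$ for every $\bs v\in \mathfrak S-\bs a$.

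The last step is geometric. Because $\mathfrak S$ is a convex body with non-empty interior and $\bs a$ is a smooth boundary point with outward normal $\bs\nu$, the closed conic hull of $\mathfrak S - \bs a$ coincides with the supporting half-space $\{\bs v : (\bs v,\bs\nu)\le 0\}$. Hence the pointwise inequality extends to every $\bs v$ with $(\bs v,\bs\nu)\le 0$, which is possible only when $G^*(t,0,x,\eta_0)\bs\nu$ is a non-negative scalar multiple of $\bs\nu$. Declaring $g(t,x;\eta;\bs\nu)$ to be that scalar produces (\ref{(3.1AB)}). I expect the main technical point to be the passage from the integral inequality to its pointwise form as $\rho\to 0$, which reduces to the continuity of $\eta\mapsto G(t,0,x,\eta)$ available from standard parabolic theory; the conic-hull identification and the geometric conclusion are then essentially immediate.
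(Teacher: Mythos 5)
Your argument is correct, but it follows a genuinely different route from the paper's. The paper fixes $(x,t)$ and $a\in\partial^*{\mathfrak S}$, splits $G^*(t,0,x,\eta)\bs\nu$ into the component along $\bs\nu$ and the orthogonal remainder $\bs f$, and then, assuming $\bs f\neq\bs 0$ on a set of positive measure, feeds in the global initial datum $\bs\psi=\bs a+\alpha\bs f-\beta\bs\nu$; the differentiability of $\partial{\mathfrak S}$ at $a$ enters quantitatively through the choice $\beta=\max\{F(\xi'):|\xi'|=\alpha\mu\}$ and the limit $\beta/\alpha\to 0$, producing a contradiction with $\int_{\mathcal M}|\bs f|^2\,d\eta>0$. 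You instead probe invariance with data $\bs a+\chi_\rho(\eta-\eta_0)\bs v$ taking values on a segment inside ${\mathfrak S}$, localize in $\eta$ to get the pointwise inequality $(\bs v,\,G^*(t,0,x,\eta_0)\bs\nu)\le 0$ for all $\bs v\in{\mathfrak S}-\bs a$, and finish by convex duality: at a point of $\partial^*{\mathfrak S}$ the normal cone is the single ray ${\mathbb R}_{\ge 0}\bs\nu$, so the closed conic hull of ${\mathfrak S}-\bs a$ is the supporting half-space, forcing $G^*\bs\nu$ to be a nonnegative multiple of $\bs\nu$. Both proofs rest on the same ingredients (the representation via $G$, the identity $\int_{{\mathbb R}^n}G\,d\eta=I$, continuity of $G$ in $\eta$, and the differentiability of the boundary at $a$), but where the paper uses differentiability explicitly via $\beta/\alpha\to 0$ and a measure-theoretic contradiction, you use it implicitly through the tangent-cone identity; that identity (equivalently, the bipolar theorem together with uniqueness of the supporting hyperplane at $a$) is the one step you assert rather than prove, and it deserves an explicit justification or a reference to Rockafellar, since it is precisely where the hypothesis $a\in\partial^*{\mathfrak S}$ is used. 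Your localization-plus-duality argument is arguably cleaner, avoids the contradiction over a set of positive measure, and yields the extra information $g\ge 0$, which the paper's statement does not record.
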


\begin{proof} Suppose  that ${\mathfrak S}$ is invariant 
for system (\ref{(0.1)}) in ${\mathbb R}_T^{n+1}$. 
According to Eidel'man \cite{EID1} (Theorem 1.3),  there exists a unique 
vector-valued function in 
$[{\rm C}^{(2,1)}({\mathbb R}^{n+1}_T)]^m\cap [{\rm C}_{\rm b}(\overline{{\mathbb R}^{n+1}_T})]^m$,
which satisfies the Cauchy problem
\begin{eqnarray} \label{(3.1A)}
& &{\partial \bs u\over\partial t}-\sum ^n_{j,k=1}{\mathcal
A}_{jk}(x, t){\partial ^2 \bs u \over\partial  x_j\partial x_k}-\sum ^n_{j=1}{\mathcal
A}_j(x, t){\partial \bs u \over\partial  x_j}
=\bs 0\;\;\;{\rm in}\;{\mathbb R}^{n+1}_T,\nonumber\\
& &\\
& &\bs u\big |_{t=0}=\bs\psi,\nonumber
\end{eqnarray}
where $\bs\psi$ is a bounded and continuous vector-valued function on 
${\mathbb R}^n$. This solution can be represented in the form
$$
\bs u(x,t)=\int _{{\mathbb R}^n}G(t,0, x,\eta )\bs \psi (\eta )d\eta .
$$

\smallskip
We fix a point $a\in \partial^*{\mathfrak S}$ and denote
$\bs\nu(a)$ by $\bs\nu$. Since 
\begin{equation} \label{(3.3AJ)}
\int _{{\mathbb R}^n}G(t,0, x, \eta )d\eta =I,
\end{equation}
the vector-valued function 
\begin{equation} \label{(3.3A)}
\bs u_a(x, t)=\bs u(x, t)-\bs a=
\int _{{\mathbb R}^n}G(t,0,x, \eta )\big ( \bs \psi (\eta )- \bs a \big ) d\eta 
\end{equation}
satisfies the Cauchy problem
\begin{eqnarray} \label{(3.4A)}
& &{\partial \bs u_a\over\partial t}-\!\sum ^n_{j,k=1}{\mathcal
A}_{jk}(x, t){\partial ^2 \bs u_a \over\partial  x_j\partial x_k}-\!\sum ^n_{j=1}{\mathcal
A}_j(x, t){\partial \bs  u_a \over\partial  x_j}
=\bs 0\;\;{\rm in}\;{\mathbb R}^{n+1}_T,\nonumber\\
& &\\
& &\bs u_a\big |_{t=0}=\bs\psi \!-\!\bs a.\nonumber
\end{eqnarray}

We fix a point $(x, t) \in {\mathbb R}^{n+1}_T$ and represent  
$G^*(t,0,x, \eta ) \bs \nu$ as
\begin{equation} \label{(3.7A)}
G^*(t,0,x,\eta ) \bs \nu=g(t, x; \eta; \bs\nu )\bs \nu+
\bs f(t, x; \eta; \bs\nu)\;,
\end{equation}
where 
\begin{equation} \label{(3.K)}
g(t, x; \eta; \bs\nu )=\big ( G^*(t,0,x, \eta )\bs \nu, \bs \nu \big )
\end{equation}
and
\begin{equation} \label{(3.S)}
\bs f(t, x; \eta; \bs\nu)=G^*(t,0,x,\eta ) \bs \nu 
-\big ( G^*(t,0,x, \eta )\bs \nu, \bs \nu \big )\bs\nu\;.
\end{equation}
Let us fix a point $(x, t), t>0$. 
By the boundedness and continuity in $\eta$ of $G(t,0,x,\eta )$
(see, e.g., Eidel'man \cite{EID1}, pp. 72, 93),  
$\bs f(t, x; \eta; \bs\nu)$ is also bounded and continuous in $\eta$.

Suppose there exists a set ${\mathcal M}\subset {\mathbb R}^n$, ${\rm meas}_n{\mathcal M}>0$,
such that for all $\eta \in {\mathcal M}$, the inequality
\begin{equation} \label{(3.SS)}
\bs f(t, x; \eta; \bs\nu)\neq \bs 0
\end{equation}
holds, and for all $\eta \in {\mathbb R}^n\backslash {\mathcal M}$ the equality
$\bs f(t, x; \eta; \bs\nu)= \bs 0$ is valid. 

Further, we set 
\begin{equation} \label{(3.8A)}
\bs \psi (\eta )- \bs a=\alpha\bs f(t, x; \eta; \bs\nu)-\beta\bs\nu\;,
\end{equation}
where $\alpha >0$, $\beta \geq 0$.
It follows from (\ref{(3.S)}) and (\ref{(3.8A)}) that
\begin{equation} \label{(3.8AY)} 
(\bs \psi (\eta )- \bs a, \bs \nu )=-\beta\leq 0,\;\;\;\;\;\;
|\bs \psi (\eta )- \bs a|=\big ( \alpha^2|\bs f(t, x; \eta; \bs\nu)|^2+\beta^2 \big )^{1/2}
\end{equation}
and
\begin{equation} \label{(3.8AU)}
(\bs \psi (\eta )- \bs a, G^*(t,0,x,\eta ) \bs \nu)=\alpha|\bs f(t, x; \eta; \bs\nu)|^2 - 
\beta \big ( G^*(t,0,x, \eta )\bs \nu, \bs \nu \big ).
\end{equation}

We introduce a Cartesian coordinate system ${\mathcal O}\xi_1\dots\xi_{m-1}$
in the plane, tangent to $\partial {\mathfrak S}$ with the origin at the point ${\mathcal O}=a$.
We direct the axis ${\mathcal O}\xi_m$ along the interior normal to $\partial {\mathfrak S}$.
Let $\bs e_1, \dots, \bs e_m$ denote the coordinate orthonormal basis of this system and let 
$\xi '=(\xi_1,\dots, \xi_{m-1})$.

We use the notation 
$$
\mu =\sup \{ |\bs f(t, x; \eta; \bs\nu)|: \eta \in {\mathbb R}^n \}.
$$
Let $\partial {\mathfrak S}$ be described by the equation 
$\xi_m=F(\xi')$ in a neighbourhood of ${\mathcal O}$, where $F$ 
is convex and differentiable at ${\mathcal O}$. 

We put $\beta=\max\;\{ F(\xi'): |\xi'|=\alpha\mu \}$. By (\ref{(3.8AY)}), 
$$ 
(\bs \psi (\eta )- \bs a, \bs e_m )=\beta\geq 0,\;\;\;\;|\bs \psi (\eta )- \bs a|\leq (\alpha^2\mu^2+\beta^2)^{1/2},
$$
which implies $\bs \psi (\eta ) \in {\mathfrak S}$ for all $\eta \in {\mathbb R}^n$.

By invariance of ${\mathfrak S}$, this gives 
\begin{eqnarray} \label{(3.5A)}
\big ( \bs u_a(x, t), \bs \nu \big )&=&
\int _{{\mathbb R}^n}\left ( G(t,0, x,\eta )\big ( \bs \psi (\eta )- \bs a \big ), \bs \nu\right ) d\eta\nonumber\\
& &\\
&=&\int _{{\mathbb R}^n}\left (  \bs \psi (\eta )- \bs a , G^*(t,0, x, \eta ) 
\bs \nu\right ) d\eta \leq 0\;.\nonumber
\end{eqnarray}
Now, by (\ref{(3.5A)}) and (\ref{(3.8AU)}),  
$$
0\geq\big ( \bs u_a(x, t), \bs \nu \big )=\int _{{\mathbb R}^n} \big [ \alpha|\bs f(t, x; \eta; \bs\nu)|^2 
- \beta \big ( G^*(t,0,x, \eta )\bs \nu, \bs \nu \big )\big ]d\eta, 
$$
which along with (\ref{(3.3AJ)}) leads to
\begin{equation} \label{(3.3P)}
0\geq\big ( \bs u_a(x, t), \bs \nu \big )=\alpha\left ( 
\int _{\mathcal M} |\bs f(t, x; \eta; \bs\nu)|^2 d\eta - {\beta \over \alpha}\right ).
\end{equation}

By the differentiability of $F$ at ${\mathcal O}$, we have $\beta /\alpha\rightarrow 0$ as $\alpha \rightarrow 0$.
Consequently, one can choose $\alpha$ so small that the second factor on the right-hand side of (\ref{(3.3P)})
becomes positive, which contradicts the condition ${\rm meas}_n{\mathcal M}>0$.
Therefore, $ \bs f(t, x; \eta; \bs\nu)=\bs 0$ for 
almost all $\eta \in {\mathbb R}^n$. 
This together with (\ref{(3.S)}) and the continuity of $G(t,0,x,\eta )$ in $\eta$ 
shows that $\bs f(t, x; \eta; \bs\nu)=\bs 0$ for all $\eta \in {\mathbb R}^n$.

Since $(x, t) \in {\mathbb R}^{n+1}_T$ and $a\in \partial^*{\mathfrak S}$ are arbitrary,
we arrive at (\ref{(3.1AB)}) by (\ref{(3.7A)}).
\end{proof}

We introduce the space $[{\rm C}_{\rm b}^{k,\alpha}({\mathbb R}^n )]^m$ of
$m$-component vector-valued functions defined in ${\mathbb R}^n$ and having continuous
and bounded derivatives up to order $k$, which satisfy 
the uniform H\"older condition with  exponent $\alpha , 0<\alpha\leq 1$. 

By $[{\rm C}_{\rm b}^{k,\alpha}
(\overline{{\mathbb R}^{n+1}_T})]^m$ we denote the space of $m$-component vector-valued 
functions defined in $\overline{{\mathbb R}^{n+1}_T}$, having continuous and bounded 
$x$-derivatives up to order $k$, which satisfy 
the uniform H\"older condition with  exponent $\alpha$ with respect to 
the parabolic distance $\big (|x-x'|^2+|t-t'|\big )^{1/2}$ between the 
points $(x, t)$ and $(x', t')$ in ${\mathbb R}^{n+1}_T$.  
For the space of $(m\times m)$-matrix-valued functions, defined on 
$\overline{{\mathbb R}^{n+1}_T}$ and having similar properties, we 
use the notation $[{\rm C}_{\rm b}^{k,\alpha} (\overline{{\mathbb R}^{n+1}_T})]^{m\times m}$.

\smallskip
Let
$$
{\mathfrak A}(x, t,D_x)=\sum ^n_{j,k=1}{\mathcal
A}_{jk}(x, t){\partial ^2  \over\partial  x_j\partial x_k}+\sum ^n_{j=1}{\mathcal
A}_{j}(x, t){\partial  \over\partial  x_j}+{\mathcal A}_0(x, t).
$$
We quote the following known assertion (see Eidel'man \cite{EID1}, Theorem 5.3), 
which will be used in the sequel.

\setcounter{theorem}{0}
\begin{theorem} \label{T_1} {\it Let $(m\times m)$-matrix valued coefficients 
${\mathcal A}_{jk}, {\mathcal A}_j, {\mathcal A}_0$ 
of the operator ${\mathfrak A}(x, t,D_x)$ belong to 
$[{\rm C}_{\rm b}^{0,\alpha} (\overline{{\mathbb R}^{n+1}_T})]^{m\times m}$ and let 
$\bs u_0\in [{\rm C}_{\rm b}^{2,\alpha}({\mathbb R}^n )]^m$. Let, further, the system 
$$
{\partial \bs u \over \partial t}-{\mathfrak A}(x, t,D_x)\bs u=\bs 0,
$$
$\bs u=(u_1,\dots,u_m)$, be uniformly parabolic in the sense of Petrovski\v{\i} in the layer 
$\overline{{\mathbb R}^{n+1}_T}$ and let $G(t,\tau, x,\eta )$ be its fundamental matrix. 
 
Then the vector-valued function
$$
\bs u(x,t)=\int _{{\mathbb R}^n}G(t,0, x,\eta )\bs u_0(\eta )d\eta 
$$
belongs to $[{\rm C}_{\rm b}^{2,\alpha}(\overline{{\mathbb R}^{n+1}_T})]^m$ and it
is a unique solution of the Cauchy problem}
$$
{\partial \bs u \over \partial t}-{\mathfrak A}(x, t,D_x)\bs u=\bs 0\;\;\;{\rm in}\;
{\mathbb R}^{n+1}_T,\;\;\;\;\bs u\big |_{t=0}=\bs u_0\;.
$$
\end{theorem}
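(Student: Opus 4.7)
The plan is to follow the Levi parametrix construction, which is the classical route for such parabolic Cauchy problems and which underlies Eidel'man's monograph. First I would construct the fundamental matrix $G(t,\tau,x,\eta)$ in the form $Z(t,\tau,x,\eta)+\int_\tau^t\!\int_{\mathbb R^n} Z(t,s,x,y)\Phi(s,\tau,y,\eta)\,dy\,ds$, where the parametrix $Z$ is the fundamental matrix of the ``frozen coefficient'' constant-coefficient system obtained by fixing the leading coefficients ${\mathcal A}_{jk}$ at the pole $(\tau,\eta)$. The uniform Petrovski\v{\i} parabolicity supplies the requisite decay of $Z$: standard Fourier/Cauchy-kernel arguments yield Gaussian bounds of the form $|D_x^\beta Z(t,\tau,x,\eta)|\leq C(t-\tau)^{-(n+|\beta|)/2}\exp(-c|x-\eta|^2/(t-\tau))$ for $|\beta|\leq 2$. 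The Hölder density $\Phi$ then solves a Volterra integral equation whose kernel has an integrable singularity of order $(t-s)^{-1+\alpha/2}$, so the Neumann series converges and yields a $\Phi$ satisfying analogous bounds.

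Once $G$ is constructed, I would record the two facts we shall actually use: the pointwise Gaussian estimates together with the conservation property $\int_{\mathbb R^n} G(t,0,x,\eta)d\eta = I$ (which follows because the constant vector solves the homogeneous equation with ${\mathcal A}_0\equiv 0$, or more generally from applying the construction to the initial datum $\bs u_0\equiv c$ and invoking uniqueness), and the fact that for each fixed $\eta$, both $G(t,\tau,x,\eta)$ and its $x$-derivatives up to order two are Hölder continuous in $(x,t)$ with exponent $\alpha$ with respect to the parabolic distance. The main obstacle here is really the derivation of Schauder-type bounds on the derivatives of the parametrix correction; these follow from the $[{\rm C}_{\rm b}^{0,\alpha}]$ hypothesis on the coefficients, since differentiating under the integral sign produces exactly the $(t-s)^{-1+\alpha/2}$ singularity that the Hölder continuity of ${\mathcal A}_{jk}$ is designed to absorb.

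With $G$ in hand, I would set $\bs u(x,t)=\int_{\mathbb R^n} G(t,0,x,\eta)\bs u_0(\eta)d\eta$ and verify the three claims. Differentiating twice in $x$ and once in $t$ under the integral is justified by the Gaussian bounds and by the boundedness of $\bs u_0$; the resulting expression vanishes because $G(\cdot,0,\cdot,\eta)$ satisfies the homogeneous system in $(x,t)$ for each $\eta$. The initial condition $\bs u(x,0)=\bs u_0(x)$ follows from the approximate-identity property of $Z$ (and hence $G$) as $t\downarrow 0$, using continuity of $\bs u_0$ combined with $\int G(t,0,x,\eta)d\eta=I$. To obtain membership in $[{\rm C}_{\rm b}^{2,\alpha}(\overline{{\mathbb R}_T^{n+1}})]^m$ one splits the kernel into the parametrix part and the correction part; for the parametrix part one uses that $\bs u_0\in [{\rm C}_{\rm b}^{2,\alpha}({\mathbb R}^n)]^m$ (so one can integrate by parts or differentiate $\bs u_0$ directly and exploit its Hölder continuity), while for the correction part the Gaussian estimates plus the Hölder continuity of ${\mathcal A}_{jk}$ again produce the needed estimates.

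Finally, uniqueness in the class $[{\rm C}_{\rm b}^{(2,1)}({\mathbb R}_T^{n+1})]^m\cap[{\rm C}_{\rm b}(\overline{{\mathbb R}_T^{n+1}})]^m$ follows from a standard scalar maximum-principle argument applied to $|\bs u|^2$, or alternatively from a Gronwall-type energy estimate: if $\bs v$ is the difference of two bounded solutions with zero initial data, multiplying by a Gaussian weight $e^{-\kappa|x|^2/(1+t)}$ and integrating yields $\|\bs v(\cdot,t)\|_{L^\infty}\leq C\int_0^t\|\bs v(\cdot,s)\|_{L^\infty}\,ds$, forcing $\bs v\equiv 0$. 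I would expect the main technical obstacle throughout to be the bookkeeping of the Gaussian-times-Hölder estimates on the Volterra kernel $\Phi$, which is precisely what makes this theorem nontrivial and why in practice one cites Eidel'man rather than reproducing the argument.
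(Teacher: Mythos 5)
The paper itself contains no proof of this statement: it is quoted as a known result from Eidel'man \cite{EID1} (Theorem 5.3 there), so there is nothing internal to compare with. Your sketch reproduces the classical Levi parametrix route on which the cited theorem rests, and the existence/regularity part of your plan is the standard and essentially correct one: Gaussian bounds for the frozen-coefficient parametrix coming from uniform Petrovski\v{\i} parabolicity, the Volterra equation for the density with the integrable $(t-s)^{-1+\alpha/2}$ singularity absorbed by the H\"older continuity of the coefficients, differentiation under the integral, the approximate-identity property at $t=0$, and Schauder-type bookkeeping (using $\bs u_0\in[{\rm C}_{\rm b}^{2,\alpha}({\mathbb R}^n)]^m$) to get membership in $[{\rm C}_{\rm b}^{2,\alpha}(\overline{{\mathbb R}^{n+1}_T})]^m$ up to $t=0$. (A minor remark: $\int G(t,0,x,\eta)\,d\eta=I$ is not part of this theorem and, since the operator here contains the zero-order term ${\mathcal A}_0$, constants are not solutions; that identity is used later in the paper only for system (1.1), where ${\mathcal A}_0=0$.)

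The genuine gap is your uniqueness step. Petrovski\v{\i} parabolicity constrains only the spectrum of the symbol $\sum{\mathcal A}_{jk}\sigma_j\sigma_k$, not its numerical range, and neither of your two suggestions survives this for a general coupled system. The maximum-principle argument applied to $|\bs u|^2$ requires an inequality of the type $(\bs u,{\mathcal A}_{jk}\partial^2_{jk}\bs u)\le\tfrac12\,a_{jk}\partial^2_{jk}|\bs u|^2+\dots$, i.e.\ essentially scalar action of the matrices; if it worked in general, the maximum modulus principle would hold for every Petrovski\v{\i}-parabolic system, contradicting the very criteria this paper is about. The Gaussian-weighted energy/Gronwall route needs a G\aa rding-type positivity ${\rm Re}\sum\big({\mathcal A}_{jk}\sigma_j\sigma_k\,\zeta,\zeta\big)\ge\delta|\sigma|^2|\zeta|^2$, which Petrovski\v{\i} parabolicity does not imply (a non-normal matrix can have its spectrum in a half-plane while its quadratic form changes sign), and in any case an $L^2$-type estimate does not yield the asserted $L^\infty$ bound without further work. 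The standard repair, and the one behind the cited theorem, is a duality argument: the formally adjoint (backward) system is again uniformly Petrovski\v{\i}-parabolic, so the same parametrix machinery produces its fundamental matrix with Gaussian bounds; pairing a bounded solution with zero initial data against solutions of the adjoint terminal-value problem and integrating by parts forces it to vanish (equivalently, one invokes the uniqueness-class theory for such systems based on estimates of the adjoint fundamental matrix). You should replace your last paragraph by an argument of this kind.
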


The following assertion gives a necessary condition for the invariance of ${\mathfrak S}$
which is formulated in terms of the coefficients of system (\ref{(0.1)}). It settles the 
necessity part of Theorem from Sect. 1.

\setcounter{theorem}{1}
\begin{proposition} \label{P_2}
{\it Let a convex body ${\mathfrak S}$ 
be invariant for system $(\ref{(0.1)})$ in ${\mathbb R}_T^{n+1}$. Then there exist
functions $a _{jk},\;a _j:{\mathbb R}^n\times {\mathfrak N}_{\mathfrak S}
\to {\mathbb R},\;1\leq j,k\leq n$, such that
$$
{\mathcal A}^*_{jk}(x, 0)\bs \nu =a_{jk}(x; \bs\nu)\bs \nu,\;\;\;
{\mathcal A}^*_j(x, 0)\bs \nu =a_j(x; \bs\nu)\bs \nu. 
$$
}
\end{proposition}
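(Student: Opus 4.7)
The plan is to exploit Proposition \ref{P_1}. The identity $G^*(t,0,x,\eta)\bs\nu = g(t,x;\eta;\bs\nu)\bs\nu$ is equivalent to the statement that the hyperplane $\bs\nu^\perp$ is left invariant by $G$, in the sense that $(G(t,0,x,\eta)\bs w,\bs\nu)=0$ whenever $(\bs w,\bs\nu)=0$. Hence, if the initial datum $\bs\psi$ satisfies $(\bs\psi(\eta),\bs\nu)=0$ for every $\eta\in\mathbb R^n$, the corresponding solution $\bs u(x,t)=\int_{\mathbb R^n}G(t,0,x,\eta)\bs\psi(\eta)\,d\eta$ also lies in $\bs\nu^\perp$, i.e.\ $(\bs u(x,t),\bs\nu)\equiv 0$ on $\overline{\mathbb R^{n+1}_T}$. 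Differentiating this identity in $t$, using the system (\ref{(0.1)}), and letting $t\to 0^+$ gives, at each $x\in\mathbb R^n$,
$$0=\sum_{j,k=1}^n\bigl(\bs\psi_{x_jx_k}(x),\mathcal A^*_{jk}(x,0)\bs\nu\bigr)+\sum_{j=1}^n\bigl(\bs\psi_{x_j}(x),\mathcal A^*_j(x,0)\bs\nu\bigr).$$
The strategy is then to realize prescribed values of $\bs\psi_{x_j}(x)$ and $\bs\psi_{x_jx_k}(x)$ by carefully chosen test data and read off the conclusion.

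To do this, fix $x_0\in\mathbb R^n$, take any $\bs v\in\bs\nu^\perp$, and let $\chi$ be a smooth compactly supported scalar cut-off with $\chi\equiv 1$ near the origin. For a scalar polynomial $q$, set $\bs\psi(\eta)=\chi(\eta-x_0)\,q(\eta-x_0)\,\bs v$. This $\bs\psi$ belongs to $[{\rm C}_{\rm b}^{2,\alpha}(\mathbb R^n)]^m$ and lies pointwise in $\bs\nu^\perp$, so Theorem \ref{T_1} yields a solution $\bs u\in[{\rm C}_{\rm b}^{2,\alpha}(\overline{\mathbb R^{n+1}_T})]^m$, and the derivatives of $\bs\psi$ at $x_0$ coincide with those of $q\bs v$ at the origin. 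Choosing $q(y)=y_j$ annihilates the second derivatives and isolates $(\bs v,\mathcal A^*_j(x_0,0)\bs\nu)=0$; choosing $q(y)=\tfrac12 y_j^2$ gives $(\bs v,\mathcal A^*_{jj}(x_0,0)\bs\nu)=0$; and $q(y)=y_jy_k$ with $j\neq k$ gives $(\bs v,(\mathcal A^*_{jk}+\mathcal A^*_{kj})(x_0,0)\bs\nu)=0$, which together with the symmetry $\mathcal A_{jk}=\mathcal A_{kj}$ imposed in (i) yields $(\bs v,\mathcal A^*_{jk}(x_0,0)\bs\nu)=0$. Since $\bs v\in\bs\nu^\perp$ was arbitrary, each of $\mathcal A^*_{jk}(x_0,0)\bs\nu$ and $\mathcal A^*_j(x_0,0)\bs\nu$ lies in $(\bs\nu^\perp)^\perp=\mathrm{span}(\bs\nu)$, which provides the scalars $a_{jk}(x_0;\bs\nu)$ and $a_j(x_0;\bs\nu)$.

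The only delicate point is the justification of the passage $t\to 0^+$ in the differentiated identity; this is precisely why Theorem \ref{T_1} is invoked rather than the weaker existence statement used in Proposition \ref{P_1}. Its ${\rm C}^{2,\alpha}$-regularity up to the initial hyperplane guarantees that both $\bs u_t$ and the second spatial derivatives of $\bs u$ are continuous on $\overline{\mathbb R^{n+1}_T}$ and reduce at $t=0$ to the corresponding derivatives of $\bs\psi$, so the algebraic identity above is valid at $t=0$ in the strong sense. Once this is in place the argument is purely linear algebra in the fibre of the normal bundle over $\bs\nu$.
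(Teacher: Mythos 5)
Your proposal is correct and follows essentially the same route as the paper: it uses Proposition \ref{P_1} to force $(\bs u,\bs\nu)\equiv 0$ for initial data orthogonal to $\bs\nu$, substitutes the system, passes to the limit $t\to 0^+$ via the regularity of Theorem \ref{T_1}, and then tests with cut-off polynomial data $\chi q\,\bs v$, $\bs v\perp\bs\nu$, to read off that $\mathcal A^*_{jk}(x,0)\bs\nu$ and $\mathcal A^*_j(x,0)\bs\nu$ lie in $\mathrm{span}(\bs\nu)$. The only (cosmetic) difference is that the paper takes one general quadratic perturbation $\bigl(\sum\alpha_{jk}(x_j-y_j)(x_k-y_k)+\sum\beta_j(x_j-y_j)\bigr)\zeta_r\bs\tau$ with arbitrary coefficients, whereas you use the monomials $y_j$, $\tfrac12 y_j^2$, $y_jy_k$ separately and invoke the symmetry $\mathcal A_{jk}=\mathcal A_{kj}$ for the mixed terms.
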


\begin{proof} Suppose  that ${\mathfrak S}$ is invariant for system (\ref{(0.1)}) 
in ${\mathbb R}_T^{n+1}$. We fix a point $a\in \partial^*{\mathfrak S}$ and denote
$\bs\nu(a)$ by $\bs\nu$.
Let the function $\bs \psi$ in (\ref{(3.4A)}) is defined by
\begin{equation} \label{(3.3Y)}
\bs\psi(x)=\bs a+\left (\sum_{j,k=1}^n \alpha_{jk}(x_j-y_j)(x_k-y_k)+ 
\sum_{j=1}^n\beta_j(x_j-y_j) \right )\zeta_r (x-y)\bs \tau,
\end{equation}
where $\alpha_{jk}, \beta_{j}$ are constants, $y$ is a fixed point in ${\mathbb R}^n$, 
$\zeta_r \in {\rm C}^\infty_0({\mathbb R}^n)$, $0\leq\zeta_r (x)\leq 1$, 
$\zeta_r(x)=1$ for $|x|\leq r/2$ and $\zeta_r(x)=0$ for $|x|\geq r$,
$\bs\tau$ is a unit $m$-dimensional vector which is orthogonal to $\bs\nu$. 

It follows from (\ref{(3.3A)}) 
and Proposition \ref{P_1} that
\begin{eqnarray*}
\big ( \bs u_a(x, t), \bs \nu \big )&=&
\int _{{\mathbb R}^n}\left (  \bs \psi (\eta )- \bs a , G^*(t,0, x, \eta ) 
\bs \nu\right ) d\eta \\
& &\\
&= &\int _{{\mathbb R}^n}g(t, x; \eta ;\bs \nu)
\left ( \bs \psi (\eta )- \bs a , \;\bs \nu\right ) d\eta\;,
\end{eqnarray*}
which, by (\ref{(3.3Y)}), gives $\big ( \bs u_a(x, t), \bs \nu \big )=0$.
This and (\ref{(3.4A)}) imply
$$
\sum ^n_{j,k=1}\left ( {\partial ^2 \bs u_a \over\partial  x_j\partial x_k},\; 
{\mathcal A}_{jk}^*(x, t)\bs \nu\right )+
\sum ^n_{j=1}\left ({\partial \bs  u_a \over\partial  x_j},\; 
{\mathcal A}_j(x, t)^* \bs \nu\right )=\bs 0\;.
$$
By Theorem \ref{T_1},
we pass to the limit as $t\rightarrow 0$ to obtain
\begin{equation} \label{(3.3H)}
\sum ^n_{j,k=1}\left ( {\partial ^2 \bs \psi_a \over\partial  x_j\partial x_k},\; 
{\mathcal A}_{jk}^*(x, 0)\bs \nu\right )+
\sum ^n_{j=1}\left ({\partial \bs \psi_a \over\partial  x_j},\; 
{\mathcal A}_j^*(x, 0) \bs \nu\right )=\bs 0\;,
\end{equation}
where $\bs\psi_a(x)=\bs\psi(x)-\bs a$. Now, (\ref{(3.3Y)}) leads to
$$
{\partial ^2 \bs \psi_a \over\partial  x_j\partial x_k}\Big |_{x=y}=\alpha_{jk}\bs \tau\;,\;\;\;\;\;\;\;
{\partial \bs \psi_a \over\partial  x_j}\Big |_{x=y}=\beta_j\bs \tau\;.
$$
Then, by (\ref{(3.3H)}),
$$
\sum ^n_{j,k=1}\alpha_{jk}\Big (\bs \tau ,\; 
{\mathcal A}_{jk}^*(y, 0)\bs \nu\Big )+
 \sum ^n_{j=1}\beta_j\Big (\bs \tau,\; {\mathcal A}_j^*(y, 0) \bs \nu\Big )=\bs 0\;.
$$
Hence, by arbitrariness of $\alpha_{jk}, \beta_{j}$ and $\bs\tau$, 
we arrive at the equalities
$$
{\mathcal A}^*_{jk}(y, 0)\bs \nu =a_{jk}(y)\bs \nu,\;\;\;
{\mathcal A}^*_j(y, 0)\bs \nu =a_j(y)\bs \nu,\;\;\;1\leq j,k\leq n
$$
with $\bs\nu=\bs\nu(a)$, where $y\in {\mathbb R}^n$ and $a\in \partial^*{\mathfrak S}$ 
are arbitrary fixed points. The proof is complete. 
\end{proof}

\section[Sufficient condition for invariance of a convex body]  
{Sufficient condition for invariance of a convex body}

Let $\bs\nu$ be a fixed $m$-dimensional unit vector and let $a$ stand for a fixed point in ${\mathbb R}^m$. 

\begin{proposition} \label{P_4}
{\it Let the equalities
\begin{equation} \label{(4.1)}
{\mathcal A}^*_{jk}(x,t)\bs\nu =a _{jk}(x,t)\bs\nu,\;\;\;
{\mathcal A}^*_j(x,t)\bs\nu =a _j(x,t)\bs\nu ,\;\;\;1\leq j,k\leq n,
\end{equation}
hold for all $(x,t)\in {\mathbb R}^{n+1}_T$  with 
$a_{jk},\;a_j:{\mathbb R}^{n+1}_T\to {\mathbb R}.$ 
Then the half-space ${\mathbb R}^m_{\bs \nu}(\bs a)$ 
is an invariant set for system $(\ref{(0.1)})$
in ${\mathbb R}^{n+1}_T$.}
\end{proposition}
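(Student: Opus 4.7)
The plan is to reduce the invariance of the half-space to the classical (scalar) maximum principle for a single linear uniformly parabolic equation on $\mathbb{R}^n\times[0,T]$. Given a bounded continuous solution $\bs u$ of (\ref{(0.1)}) with $\bs u(\cdot,0)\in {\mathbb R}^m_{\bs\nu}(\bs a)$, I set
$$
v(x,t)=\big(\bs u(x,t)-\bs a,\bs\nu\big),
$$
which is continuous and bounded on $\overline{\mathbb R^{n+1}_T}$ and satisfies $v(x,0)\le 0$. The goal is to show $v\le 0$ throughout $\mathbb R^{n+1}_T$.

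First I would derive the governing scalar equation for $v$. Taking the $\bs\nu$-inner product of the system with $\bs\nu$ and moving each matrix to its transpose gives
$$
\frac{\partial v}{\partial t}=\sum_{j,k=1}^n\Big(\frac{\partial^2\bs u}{\partial x_j\partial x_k},\mathcal A_{jk}^*\bs\nu\Big)+\sum_{j=1}^n\Big(\frac{\partial\bs u}{\partial x_j},\mathcal A_j^*\bs\nu\Big).
$$
By hypothesis (\ref{(4.1)}), each term $\mathcal A_{jk}^*\bs\nu$ and $\mathcal A_j^*\bs\nu$ is a scalar multiple of $\bs\nu$, so the right-hand side collapses to
$$
\frac{\partial v}{\partial t}=\sum_{j,k=1}^n a_{jk}(x,t)\frac{\partial^2 v}{\partial x_j\partial x_k}+\sum_{j=1}^n a_j(x,t)\frac{\partial v}{\partial x_j}.
$$

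Second, I would check that this scalar equation is uniformly parabolic. Since $\bs\nu$ is an eigenvector of the transposed matrix $\sum_{j,k}\mathcal A_{jk}^*(x,t)\sigma_j\sigma_k$ with eigenvalue $\sum_{j,k}a_{jk}(x,t)\sigma_j\sigma_k$, and the spectra of a matrix and its transpose coincide, this real number is an eigenvalue of $\sum_{j,k}\mathcal A_{jk}(x,t)\sigma_j\sigma_k$. The Petrovski\v{\i} condition then forces
$$
\sum_{j,k=1}^n a_{jk}(x,t)\sigma_j\sigma_k\ge \delta|\bs\sigma|^2,
$$
so the scalar symbol is uniformly elliptic. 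The regularity of $a_{jk},a_j$ (continuous and bounded with the appropriate H\"older modulus) is inherited from that of $\mathcal A_{jk},\mathcal A_j$.

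Third, I would invoke the scalar maximum principle for bounded solutions of a uniformly parabolic linear equation on the layer $\mathbb R^n\times[0,T]$ (a Tychonoff-type statement, as in Protter--Weinberger or Eidel'man): a bounded solution of a uniformly parabolic scalar equation with no zero-order term attains its supremum on the initial hyperplane $t=0$. Since $v(\cdot,0)\le 0$, this yields $v(x,t)\le 0$ everywhere, which is precisely the assertion that ${\mathbb R}^m_{\bs\nu}(\bs a)$ is invariant. The main subtlety is the last step: one must apply a version of the maximum principle valid on the unbounded domain $\mathbb R^n\times[0,T]$ for bounded solutions, rather than its bounded-domain form; once that is in place, everything else is a short algebraic manipulation.
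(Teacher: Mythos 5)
Your proposal is correct and follows essentially the same route as the paper: take the inner product with $\bs\nu$, use (\ref{(4.1)}) to collapse the system to a scalar equation for $(\bs u-\bs a,\bs\nu)$, and conclude by the maximum principle for bounded solutions of a uniformly parabolic scalar equation on the layer. Your extra verification that the reduced scalar operator is uniformly parabolic (via the Petrovski\v{\i} condition and the coincidence of the spectra of a matrix and its transpose) is a detail the paper leaves implicit, and it is argued correctly.
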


\begin{proof} Let $\bs u\in [{\rm C}_{\rm b}(\overline {{\mathbb R}^{n+1}_T})]^m\cap 
[{\rm C}^{(2,1)}({\mathbb R}^{n+1}_T)]^m$ be a solution
of the Cauchy problem (\ref{(3.1A)}).
Then the vector-valued function $\bs u_a=\bs u- \bs a$ is solution of the Cauchy problem (\ref{(3.4A)}).

Hence,
\begin{eqnarray*}
& &{\partial\over\partial t}(\bs u_a,\bs \nu)-\sum
^n_{j,k=1}\left ({\mathcal A}_{jk}(x,t){\partial ^2\bs u_a\over\partial
x_j\partial x_k},\;\bs\nu \right )-\sum
^n_{j=1}\left ({\mathcal A}_j(x,t){\partial \bs u_a\over\partial
x_j},\;\bs\nu \right )\\
& &={\partial\over\partial t}(\bs u_a,\bs\nu )-\sum
^n_{j,k=1}\left ({\partial ^2\bs u_a\over\partial x_j\partial
x_k},\;{\mathcal A}^*_{jk}(x,t)\bs\nu\right )-\sum
^n_{j=1}\left ({\partial \bs u_a\over\partial x_j\partial
x_k},\;{\mathcal A}^*_j(x,t)\bs\nu\right )=0\;.
\end{eqnarray*}
By (\ref{(4.1)}) we arrive at
\begin{eqnarray*} 
& &{\partial\over\partial t}(\bs u_a,\bs\nu)-\sum
^n_{j,k=1}\left ({\partial ^2\bs u_a\over\partial x_j\partial
x_k},\;a_{jk}(x,t)\bs\nu\right )-\sum
^n_{j=1}\left ({\partial \bs u_a\over\partial x_j},\;a_j(x,t)\bs\nu\right )\\
& &={\partial\over\partial t}(\bs u_a,\bs\nu )-\sum
^n_{j,k=1}a_{jk}(x,t){\partial ^2\over\partial x_j\partial
x_k}(\bs u_a,\bs\nu )-\sum
^n_{j=1}a_j(x,t){\partial \over\partial x_j}(\bs u_a,\bs\nu )=0\;.
\end{eqnarray*}
Thus the function $u_a=(\bs u_a,\bs\nu )$ satisfies 
$$
{\partial u_a\over\partial t}-\sum ^n_{j,k=1}a_{jk}(x,t)
{\partial ^2 u_a\over\partial x_j\partial x_k}
-\sum ^n_{j=1}a_j(x,t){\partial u_a\over\partial x_j}
=0\;\;\hbox {in}\;\;{\mathbb R}^{n+1}_T,\;u_a\Big |_{t=0}=(\bs\psi -\bs a, \bs\nu ).
$$
Therefore, by the maximum principle for solutions to the scalar parabolic
equation in ${\mathbb R}^{n+1}_T$ with the unknown function $u_a$, we conclude
$$
\inf_{y\in{\mathbb R}^n}\big (\bs u(y,0)-\bs a, \bs \nu \big )
\leq \big ( \bs u(x,t)-\bs a, \bs \nu \big ) \leq \;
\sup_{y\in{\mathbb R}^n}\big (\bs u(y,0)-\bs a, \bs \nu \big ),
$$
i.e., the half-space ${\mathbb R}^m_{\bs \nu}(\bs a)$ is invariant for system (\ref{(0.1)})
in ${\mathbb R}^{n+1}_T$.
\end{proof}

The next assertion results directly from Proposition \ref{P_4} 
and the known assertion (Rockafellar \cite{ROCK}, Theorem 18.8):
$$
{\mathfrak S}=\bigcap_{a \in \partial^*{\mathfrak S}}{\mathbb R}^m_{\bs \nu(a)}(\bs a).
$$

\begin{proposition} \label{P_5}
{\it Let ${\mathfrak S}$ 
be a convex body and let the equalities
$$
{\mathcal A}^*_{jk}(x,t)\bs\nu =a _{jk}(x,t;\bs\nu)\bs\nu,\;\;\;
{\mathcal A}^*_j(x,t)\bs\nu =a _j(x,t;\bs\nu)\bs\nu,\;\;\;1\leq j,k\leq n,
$$
hold for all $(x,t)\in {\mathbb R}^{n+1}_T$  and $\bs\nu\in{\mathfrak N}_{\mathfrak S}$ with 
$a_{jk},\;a_j:{\mathbb R}^{n+1}_T\times {\mathfrak N}_{\mathfrak S}\to {\mathbb R}$.

Then ${\mathfrak S}$ is an invariant for system $(\ref{(0.1)})$
in ${\mathbb R}^{n+1}_T$.}
\end{proposition}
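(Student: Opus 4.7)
The plan is to combine Proposition \ref{P_4} with the cited representation of a convex body as an intersection of its supporting half-spaces (Rockafellar, Theorem 18.8), which is exactly the direction the paragraph preceding Proposition \ref{P_5} sets up. The argument should be short and structural rather than analytic, since all the parabolic PDE work has already been absorbed into Proposition \ref{P_4}.

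First I would fix an arbitrary bounded continuous initial datum $\bs\psi$ with $\bs\psi(\eta)\in{\mathfrak S}$ for every $\eta\in{\mathbb R}^n$, and let $\bs u$ be the corresponding solution of the Cauchy problem (\ref{(3.1A)}) in $[{\rm C}^{(2,1)}({\mathbb R}^{n+1}_T)]^m\cap[{\rm C}_{\rm b}(\overline{{\mathbb R}^{n+1}_T})]^m$ (its existence, uniqueness, boundedness and continuity on $\overline{{\mathbb R}^{n+1}_T}$ come from the same Eidel'man reference used earlier). Then I would fix an arbitrary $a\in\partial^*{\mathfrak S}$ with unit outward normal $\bs\nu=\bs\nu(a)\in{\mathfrak N}_{\mathfrak S}$.

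Next, since $\bs\psi(\eta)\in{\mathfrak S}\subset{\mathbb R}^m_{\bs\nu}(\bs a)$ for every $\eta$, the initial datum lies in the closed half-space determined by $a$ and $\bs\nu$. By the hypothesis of Proposition \ref{P_5}, the eigenvector relations (\ref{(4.1)}) are satisfied for this particular $\bs\nu$ with $a_{jk}(x,t)=a_{jk}(x,t;\bs\nu)$ and $a_j(x,t)=a_j(x,t;\bs\nu)$. Proposition \ref{P_4} therefore applies and yields the invariance of ${\mathbb R}^m_{\bs\nu}(\bs a)$ for system (\ref{(0.1)}). Consequently $\bs u(x,t)\in{\mathbb R}^m_{\bs\nu}(\bs a)$ for every $(x,t)\in{\mathbb R}^{n+1}_T$.

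Finally, since $a\in\partial^*{\mathfrak S}$ was arbitrary, I would conclude
$$
\bs u(x,t)\in\bigcap_{a\in\partial^*{\mathfrak S}}{\mathbb R}^m_{\bs\nu(a)}(\bs a)={\mathfrak S}
$$
by Rockafellar's Theorem 18.8 quoted just before the statement. This is exactly the invariance of ${\mathfrak S}$. There is really no hard step here: the only thing to be slightly careful about is that Proposition \ref{P_4} requires its eigenvector identity to hold with a \emph{single} pair of scalar functions $a_{jk}(x,t),a_j(x,t)$ for the fixed $\bs\nu$, and this is precisely what the hypothesis of Proposition \ref{P_5} provides once $\bs\nu$ is fixed, by setting $a_{jk}(x,t):=a_{jk}(x,t;\bs\nu)$ and $a_j(x,t):=a_j(x,t;\bs\nu)$. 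Everything else is a direct quotation of the two preceding results.
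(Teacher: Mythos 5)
Your proposal is correct and follows exactly the route the paper intends: Proposition \ref{P_4} applied to each supporting half-space ${\mathbb R}^m_{\bs\nu(a)}(\bs a)$, $a\in\partial^*{\mathfrak S}$, combined with Rockafellar's Theorem 18.8 representing ${\mathfrak S}$ as the intersection of these half-spaces. The paper states this derivation in one sentence before the proposition; you have merely written out the same argument in detail, so there is nothing to correct.
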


Hence, the proof of sufficiency in Theorem from Sect.1 is obtained.

\section[Corollaries]{Corollaries}

\smallskip
Let us introduce a layer ${\mathbb R}^{n+1}_{\tau, T}={\mathbb R}^n\times (\tau, T]$, 
where $\tau \in [0, T)$. We say that a convex body ${\mathfrak S}$ is invariant
for system (\ref{(0.1)}) in ${\mathbb R}_{\tau, T}^{n+1}$, if any solution $\bs u$ 
of (\ref{(0.1)}), which is continuous and bounded 
in $\overline{{\mathbb R}_{\tau, T}^{n+1}}$, 
belongs to ${\mathfrak S}$ under the assumption that its initial values 
$\bs u(\cdot, \tau)$ lie in ${\mathfrak S}$.

Let $\tau \in [0, T)$. Repeating almost word for word all previous proofs replacing
$\bs u|_{t=0}$ by $\bs u|_{t=\tau}$, ${\mathbb R}^{n+1}_{0, T}$ by 
${\mathbb R}^{n+1}_{\tau, T}$, $G(t,0,x, \eta )$ by $G(t,\tau,x, \eta )$ 
and making obvious similar changes, we arrive 
at the following criterion for the invariance of ${\mathfrak S}$ for the parabolic system 
(\ref{(0.1)}) in any layer ${\mathbb R}^{n+1}_{\tau, T}$ with $\tau \in [0, T)$.

\begin{proposition} \label{T_6}
{\it A convex body ${\mathfrak S}$ is invariant for system $(\ref{(0.1)})$
in the layer ${\mathbb R}^{n+1}_{\tau, T}$ for all $\tau \in [0, T)$ simultaneously,
if and only if the unit outward normal $\bs\nu(a)$ to 
$\partial{\mathfrak S}$ at any point $a\in \partial{\mathfrak S}$ for which it exists,
is an eigenvector of all matrices ${\mathcal A}^*_{jk}(x, t),\; {\mathcal A}^*_j(x, t)$, 
$1\leq j,k\leq n,\;(x, t)\in {\mathbb R}^{n+1}_T$.} 
\end{proposition}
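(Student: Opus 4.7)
The plan is to leverage the fact, announced by the authors just before the statement, that the proofs of Propositions~\ref{P_1}, \ref{P_2}, \ref{P_4}, and \ref{P_5} transcribe verbatim after the indicated substitutions (initial data prescribed at $t=\tau$ instead of $t=0$, fundamental matrix $G(t,\tau,x,\eta)$ in place of $G(t,0,x,\eta)$, and the corresponding layer ${\mathbb R}^{n+1}_{\tau,T}$ in place of ${\mathbb R}^{n+1}_{0,T}$). I will use these layer-versions as black boxes and concentrate on matching the pointwise conditions across varying $\tau$.

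For sufficiency, I would fix an arbitrary $\tau\in[0,T)$. Since the eigenvector identity is assumed on the whole of ${\mathbb R}^{n+1}_T$, it holds in particular on the sub-layer ${\mathbb R}^{n+1}_{\tau,T}$. The layer-version of Proposition~\ref{P_4} applied to each supporting half-space ${\mathbb R}^m_{\bs\nu(a)}(\bs a)$, combined with the Rockafellar representation
$$
{\mathfrak S}=\bigcap_{a\in\partial^*{\mathfrak S}}{\mathbb R}^m_{\bs\nu(a)}(\bs a),
$$
i.e., the layer-version of Proposition~\ref{P_5}, yields invariance of ${\mathfrak S}$ for system~(\ref{(0.1)}) in ${\mathbb R}^{n+1}_{\tau,T}$. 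As $\tau$ was arbitrary, this furnishes simultaneous invariance for every $\tau\in[0,T)$.

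For necessity, I would fix an arbitrary $\tau\in[0,T)$ and apply the layer-version of Proposition~\ref{P_2} to the layer ${\mathbb R}^{n+1}_{\tau,T}$. Exactly as in the original proof, for any $a\in\partial^*{\mathfrak S}$ one constructs a test datum at time $\tau$ of the form (\ref{(3.3Y)}) (now centered at $t=\tau$), feeds it into the representation formula based on $G(t,\tau,x,\eta)$, and invokes Theorem~\ref{T_1} to pass to the limit $t\to\tau^+$. The conclusion is that $\bs\nu(a)$ is an eigenvector of ${\mathcal A}^*_{jk}(x,\tau)$ and ${\mathcal A}^*_j(x,\tau)$ for every $x\in{\mathbb R}^n$. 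Letting $\tau$ range over $[0,T)$ delivers the eigenvector property on ${\mathbb R}^n\times[0,T)$, and the uniform H\"older continuity of the coefficients on $\overline{{\mathbb R}^{n+1}_T}$ (hypothesis~(i) of Sect.~1) then extends it to the slice $t=T$ by continuity, covering all $(x,t)\in{\mathbb R}^{n+1}_T$.

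The step requiring the most care is verifying that the adapted version of Proposition~\ref{P_2} indeed goes through with $\tau>0$: one needs the existence and standard regularity of the fundamental matrix $G(t,\tau,x,\eta)$ together with the normalization $\int_{{\mathbb R}^n}G(t,\tau,x,\eta)\,d\eta=I$ for every $\tau\in[0,T)$, both of which are furnished by the Eidel'man theory \cite{EID1} under the present hypotheses on the coefficients. Once these ingredients are in place, the construction, the inequality \`a la (\ref{(3.3P)}) derived from invariance, and the passage to the limit all proceed without change, and only the final continuity-based extension from $t\in[0,T)$ to $t=T$ is genuinely new; it is immediate from hypothesis~(i).
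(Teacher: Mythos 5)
Your proposal is correct and follows essentially the same route as the paper, which itself only sketches the argument by saying the earlier proofs of Propositions \ref{P_1}--\ref{P_5} carry over verbatim with $t=0$ replaced by $t=\tau$ and $G(t,0,x,\eta)$ by $G(t,\tau,x,\eta)$. Your explicit treatment of the endpoint $t=T$ via the H\"older continuity of the coefficients is a detail the paper leaves implicit, but it is consistent with its intended argument.
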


All criteria, formulated below, concern invariant convex bodies
for system (\ref{(0.1X)}) in ${\mathbb R}^{n+1}_T$. We note that 
similar assertions are valid also for system (\ref{(0.1)}) in any layer 
${\mathbb R}^{n+1}_{\tau, T}$ with $\tau \in [0, T)$.

\smallskip
{\bf Polyhedral angles.} We introduce a polyhedral angle 
$$
{\mathbf R}^m_+(\alpha_{m-k+1}, \dots,\alpha_m)=\{ u=(u_1,\dots, u_m): u_{m-k+1}\geq \alpha_{m-k+1}, \dots,u_m\geq \alpha_m \},
$$
where $k=1,\dots, m$. In particular, ${\mathbf R}^m_+(\alpha_m)$ is a half-space, 
${\mathbf R}^m_+(\alpha_{m-1},\alpha_m)$ is a dihedral angle, and ${\mathbf R}^m_+(\alpha_1,\dots\alpha_m)$ 
is an orthant in ${\mathbb R}^m$.

Using Corollary stated in Sect. 1, we derive 

\begin{corollary} \label{C_1} 
{\it The polyhedral angle ${\mathbf R}^m_+(\alpha_{m-k+1}, \dots,\alpha_m)$ 
is invariant for system $(\ref{(0.1X)})$ in ${\mathbb R}^{n+1}_T$ if and 
only if all nondiagonal elements of $m-k+1$-th,\dots, $m$-th rows of the 
matrix-valued functions ${\mathcal A}_{jk}$ and ${\mathcal A}_j$,
$1\leq j,k\leq n$, are equal to zero. 

In particular, a half-plane ${\mathbf R}^2_+(\alpha_2)$ is invariant 
for system $(\ref{(0.1X)})$ in ${\mathbb R}^{n+1}_T$ if and only if all
$(2\times 2)$-matrix-valued functions ${\mathcal A}_{jk}$ and ${\mathcal A}_j$,
$1\leq j,k\leq n$, are upper triangular. 
}
\end{corollary}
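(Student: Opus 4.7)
The plan is to apply the Corollary from Section~1 directly, reducing everything to identifying $\mathfrak{N}_{\mathfrak{S}}$ for the polyhedral angle and then translating the eigenvector condition into a statement about rows of the coefficient matrices.

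First I would describe $\partial^*\mathfrak{S}$ for the polyhedral angle $\mathfrak{S}={\mathbf R}^m_+(\alpha_{m-k+1},\dots,\alpha_m)$. A point $u\in\partial\mathfrak{S}$ lies on one of the supporting hyperplanes $\{u_i=\alpha_i\}$, $m-k+1\le i\le m$. The unit outward normal exists precisely at those boundary points where exactly one of the constraints $u_i=\alpha_i$ is active; the points on the lower-dimensional faces where two or more constraints are simultaneously tight are excluded. At a point where only the $i$-th constraint is active, the unique unit outward normal is $-\bs e_i$. Hence
\[
\mathfrak{N}_{\mathfrak{S}}=\{-\bs e_{m-k+1},\dots,-\bs e_m\}.
\]

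Next I would invoke the Corollary from Section~1: invariance of $\mathfrak{S}$ for $(\ref{(0.1X)})$ is equivalent to $\bs\nu(a)$ being an eigenvector of every $\mathcal{A}^*_{jk}(x)$ and $\mathcal{A}^*_j(x)$ for each $a\in\partial^*\mathfrak{S}$ and $x\in{\mathbb R}^n$. Combined with the previous step, this amounts to requiring that each $\bs e_i$ with $i\in\{m-k+1,\dots,m\}$ be an eigenvector of every $\mathcal{A}^*_{jk}(x)$ and $\mathcal{A}^*_j(x)$. The translation I would then carry out is elementary: for any $(m\times m)$-matrix $M$ with entries $M_{ab}$, the vector $\bs e_i$ is an eigenvector of $M^*$ if and only if $M^*\bs e_i$ is a scalar multiple of $\bs e_i$, which says $M_{ia}=0$ for all $a\neq i$, i.e., the $i$-th row of $M$ has zero off-diagonal entries. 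Applying this with $M={\mathcal A}_{jk}(x)$ and $M={\mathcal A}_j(x)$ for every $i\in\{m-k+1,\dots,m\}$ yields exactly the stated condition.

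For the special case $m=2$, $k=1$, the only normal direction is $-\bs e_2$, and the above reduces to the requirement that the $(2,1)$-entry of every ${\mathcal A}_{jk}$ and ${\mathcal A}_j$ vanish, i.e., these matrices are upper triangular. I do not anticipate any real obstacle; the only delicate point is being explicit about $\partial^*\mathfrak{S}$ (discarding the lower-dimensional faces of the polyhedral angle where the normal is not defined), after which the argument is a direct bookkeeping reading of the Corollary of Section~1 together with the elementary matrix-row observation above.
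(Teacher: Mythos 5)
Your proposal is correct and follows exactly the route the paper intends: the paper derives this corollary directly from the Corollary of Section~1, and your identification of ${\mathfrak N}_{\mathfrak S}=\{-\bs e_{m-k+1},\dots,-\bs e_m\}$ together with the observation that $\bs e_i$ is an eigenvector of $M^*$ precisely when the off-diagonal entries of the $i$-th row of $M$ vanish is the intended (and correct) bookkeeping.
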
  

{\bf Cylinders.} Let 
$$
{\mathbf R}^m_-(\beta_{m-k+1}, \dots,\beta_m)=\{ u=(u_1,\dots, u_m): 
u_{m-k+1}\leq \beta_{m-k+1}, \dots,u_m\leq \beta_m \}
$$
be a polyhedral angle and $\alpha_{m-k+1}<\beta_{m-k+1}, \dots, \alpha_m<\beta_m$.

Let us introduce a polyhedral cylinder 
$$
{\mathbf C}^m(\alpha_{m-k+1}, \dots,\alpha_m; \beta_{m-k+1}, \dots,\beta_m)=
{\mathbf R}^m_+(\alpha_{m-k+1}, \dots,\alpha_m)\cap{\mathbf R}^m_-(\beta_{m-k+1}, 
\dots,\beta_m),
$$
$k<m$.

In particular, ${\mathbf C}^m(\alpha_m; \beta_m)$ is a layer and 
${\mathbf C}^m(\alpha_{m-1},\alpha_m; \beta_{m-1},\beta_m)$ is a rectangular cylinder.

The following criterion stems from Corollary stated in Sect. 1. 

\begin{corollary} \label{C_2}
{\it The polyhedral cylinder ${\mathbf C}^m(\alpha_{m-k+1}, 
\dots,\alpha_m; \beta_{m-k+1}, \dots,\beta_m)$ 
is invariant for system $(\ref{(0.1X)})$ in ${\mathbb R}^{n+1}_T$ 
if and only if all nondiagonal elements 
of $m-k+1$-th, $m-k+2$-th,\dots, $m$-th rows of matrix-valued functions 
${\mathcal A}_{jk}$ and ${\mathcal A}_j$,
$1\leq j,k\leq n$, are equal to zero. 

In particular, a strip ${\mathbf C}^2(\alpha_2; \beta_2)$ is invariant 
for system $(\ref{(0.1X)})$ in ${\mathbb R}^{n+1}_T$ if and only if all
$(2\times 2)$-matrix-valued functions ${\mathcal A}_{jk}$ and ${\mathcal A}_j$,
$1\leq j,k\leq n$, are upper triangular. 
}
\end{corollary}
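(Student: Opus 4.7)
The plan is to reduce Corollary \ref{C_2} to the Corollary stated in Section 1 by identifying the set ${\mathfrak N}_{\mathfrak S}$ of outward unit normals for the polyhedral cylinder and then translating the eigenvector condition on the transposed coefficient matrices into a condition on the rows of ${\mathcal A}_{jk}$ and ${\mathcal A}_j$ themselves.

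First I would describe $\partial^*{\mathfrak S}$ for ${\mathfrak S}={\mathbf C}^m(\alpha_{m-k+1}, \dots,\alpha_m; \beta_{m-k+1}, \dots,\beta_m)$. The boundary consists of the relative interiors of the $2k$ faces $\{u_p=\alpha_p\}\cap{\mathfrak S}$ and $\{u_p=\beta_p\}\cap{\mathfrak S}$ for $p=m-k+1,\dots,m$, together with lower-dimensional faces where the outward normal is not uniquely defined. At every point of $\partial^*{\mathfrak S}$ the outward unit normal is therefore $\pm\bs e_p$ for some $p\in\{m-k+1,\dots,m\}$, so
$$
{\mathfrak N}_{\mathfrak S}=\{\pm\bs e_{m-k+1},\dots,\pm\bs e_m\}.
$$

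Next I would apply the Corollary from Section 1: the cylinder is invariant for system $(\ref{(0.1X)})$ if and only if each $\bs\nu\in{\mathfrak N}_{\mathfrak S}$ is an eigenvector of ${\mathcal A}^*_{jk}(x)$ and ${\mathcal A}^*_j(x)$ for every $x\in{\mathbb R}^n$ and $1\leq j,k\leq n$. Since replacing $\bs\nu$ by $-\bs\nu$ does not affect the eigenvector property, this reduces to requiring that every $\bs e_p$, $p\in\{m-k+1,\dots,m\}$, be an eigenvector of all ${\mathcal A}^*_{jk}(x)$ and ${\mathcal A}^*_j(x)$.

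The remaining step is a routine linear-algebra translation: for any $m\times m$ matrix ${\mathcal B}$, the vector ${\mathcal B}^*\bs e_p$ has $q$-th component $({\mathcal B}^*)_{qp}=({\mathcal B})_{pq}$, so ${\mathcal B}^*\bs e_p$ equals the $p$-th row of ${\mathcal B}$ viewed as a column vector. Hence $\bs e_p$ is an eigenvector of ${\mathcal B}^*$ if and only if all off-diagonal entries of the $p$-th row of ${\mathcal B}$ vanish. Applying this with ${\mathcal B}={\mathcal A}_{jk}(x)$ and ${\mathcal B}={\mathcal A}_j(x)$ for each $p\in\{m-k+1,\dots,m\}$ yields precisely the stated criterion, and the special case $m=2$, $k=1$ gives the claim about upper-triangular $2\times 2$ coefficients.

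There is really no hard step here; the entire content is the identification of ${\mathfrak N}_{\mathfrak S}$ (which follows from the product structure of the cylinder as ${\mathbf R}^m_+\cap{\mathbf R}^m_-$) together with the transposition bookkeeping relating eigenvectors of ${\mathcal B}^*$ to row-sparsity of ${\mathcal B}$. The only point requiring mild care is to note that lower-dimensional faces of the cylinder need not be considered, because $\partial^*{\mathfrak S}$ is defined only at points with a well-defined outward normal, and the Corollary from Section 1 already rests on this restriction.
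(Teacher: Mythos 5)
Your argument is correct and follows essentially the same route as the paper, which derives Corollary \ref{C_2} directly from the Corollary of Section 1 by noting that the outward unit normals of the polyhedral cylinder are $\pm\bs e_{m-k+1},\dots,\pm\bs e_m$ and that $\bs e_p$ is an eigenvector of ${\mathcal A}^*$ exactly when the off-diagonal entries of the $p$-th row of ${\mathcal A}$ vanish. Your identification of ${\mathfrak N}_{\mathfrak S}$ and the transposition bookkeeping are exactly the (unwritten) content of the paper's one-line justification, including the strip special case.
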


Let us introduce the body
$$
{\mathbf S}^m_k(R)=\{ u=(u_1,\dots, u_m): u_{m-k+1}^2+\dots +u_m^2\leq R^2 \},
$$
which is a spherical cylinder for $k< m$. 

Using Corollary stated in Sect. 1, we arrive at the following criterion.

\begin{corollary} \label{C_3} 
{\it The body ${\mathbf S}^m_k(R)$ 
is invariant for system $(\ref{(0.1X)})$ in ${\mathbb R}^{n+1}_T$ if and only if:
 
{\rm (i)} all nondiagonal elements of $m-k+1$-th, $m-k+2$-th,\dots, $m$-th rows of
matrix-valued functions ${\mathcal A}_{jk}$ and ${\mathcal A}_j$,
$1\leq j,k\leq n$, are equal to zero; 

{\rm (ii)} all $m-k+1$-th, $m-k+2$-th,\dots, $m$-th diagonal elements of matrix 
${\mathcal A}_{jk}(x)$ $\big ({\mathcal A}_j(x) \big )$ are equal for any 
fixed point $x\in {\mathbb R}^n$ and indices $j,k=1,\dots,n$.}
\end{corollary}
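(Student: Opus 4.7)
The plan is to apply the Corollary from Section 1 and reduce the invariance of ${\mathbf S}^m_k(R)$ to an algebraic condition on the last $k$ rows of each coefficient matrix. First I would describe the set ${\mathfrak N}_{\mathfrak S}$ of outward unit normals. A point $a \in \partial{\mathbf S}^m_k(R)$ has a unit outward normal precisely when $a_{m-k+1}^2+\dots+a_m^2=R^2$, in which case $\bs\nu(a)=(0,\dots,0,a_{m-k+1}/R,\dots,a_m/R)$. Hence ${\mathfrak N}_{\mathfrak S}$ is exactly the set of unit vectors of the form $(0,\dots,0,w_1,\dots,w_k)$ with $w_1^2+\dots+w_k^2=1$; i.e.\ the unit sphere in the last $k$ coordinates.

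Next, fix any $(x,t)$ and let $A$ denote one of the matrices ${\mathcal A}_{jk}(x)$ or ${\mathcal A}_j(x)$. By the Corollary of Section 1, ${\mathbf S}^m_k(R)$ is invariant iff, for every such $A$ and every $\bs\nu\in{\mathfrak N}_{\mathfrak S}$, we have $A^*\bs\nu=\lambda(\bs\nu)\bs\nu$. Writing $A=(a_{ij})$, the $i$-th component of $A^*\bs\nu$ equals $\sum_{j=m-k+1}^{m} a_{ji}\nu_j$ since $\nu_j=0$ for $j\le m-k$. Then I would split into two cases. For $i\le m-k$, the condition $(A^*\bs\nu)_i=0$ must hold for every choice of $(\nu_{m-k+1},\dots,\nu_m)$ on the unit sphere of $\mathbb R^k$, which forces $a_{ji}=0$ whenever $j\in\{m-k+1,\dots,m\}$ and $i\in\{1,\dots,m-k\}$. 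This is the "off-diagonal, first $m-k$ columns" portion of condition (i).

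For $i\in\{m-k+1,\dots,m\}$, the system becomes $B^*w=\lambda w$ where $B$ is the bottom-right $k\times k$ block of $A$ and $w=(\nu_{m-k+1},\dots,\nu_m)$. The key algebraic step, which I expect to be the only real content beyond bookkeeping, is the elementary lemma that a $k\times k$ matrix having every unit vector as an eigenvector must be a scalar multiple of $I_k$ (apply the hypothesis to two noncollinear vectors and to their sum). Thus $B=cI_k$ for some scalar $c=c(A,x)$, which gives the "off-diagonal, last $k$ columns" portion of condition (i) together with condition (ii) (the $k$ diagonal entries in the last $k$ rows are all equal).

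Conversely, if (i) and (ii) hold for a given $A$, then for every $\bs\nu\in{\mathfrak N}_{\mathfrak S}$ one checks directly from the formula for $A^*\bs\nu$ that $A^*\bs\nu=c\bs\nu$, so $\bs\nu$ is an eigenvector. Applying this to each ${\mathcal A}_{jk}(x)$ and ${\mathcal A}_j(x)$ and invoking the Corollary of Section 1 once more yields the invariance of ${\mathbf S}^m_k(R)$. The main obstacle is simply the careful indexing between $A$, $A^*$, and the bottom-right block; the only nontrivial algebraic input is the scalar-multiple-of-identity lemma above.
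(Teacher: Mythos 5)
Your proposal is correct and follows exactly the route the paper intends: Corollary \ref{C_3} is stated there as a direct consequence of the Corollary in Sect.~1, and your computation of ${\mathfrak N}_{\mathfrak S}$ for the spherical cylinder, the splitting into the first $m-k$ and last $k$ components, and the elementary fact that a $k\times k$ matrix with every unit vector as an eigenvector is scalar are precisely the omitted details. No gaps; the indexing and the converse check are handled correctly.
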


\smallskip
{\bf Cones.} By ${\mathbf K}^m_p$ we denote a convex polyhedral 
cone in ${\mathbb R}^m$ with $p$ facets. Let, further, $\{\bs\nu_1,\dots, \bs\nu_p \}$ be 
the set of unit outward normals to the facets of this cone. 
By $[\bs v_1,\dots, \bs v_m]$ we mean the
$(m\times m)$-matrix whose columns are $m$-component vectors $\bs v_1,\dots, \bs v_m$.

\smallskip
We give an auxiliary assertion of geometric character. 

\setcounter{theorem}{0}
\begin{lemma} \label{L_1} Let ${\mathbf K}^m_p$ be a convex polyhedral cone in
${\mathbb R}^m$ with $p$ facets, $p\geq m$. 
Then any system $\bs\nu_1,\dots,\bs\nu_m $ of unit outward normals to 
$m$ different facets of ${\mathbf K}^m_p$ is linear independent.
\end{lemma}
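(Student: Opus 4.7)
The plan is to proceed by contradiction: assuming the $m$ chosen normals are linearly dependent, extract a distinguished direction that would force the cone $\mathbf K^m_p$ to degenerate.

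Suppose $\bs\nu_1,\dots,\bs\nu_m$ are linearly dependent. Their span has dimension at most $m-1$, so its orthogonal complement contains a nonzero vector $\bs w$ satisfying $(\bs w,\bs\nu_i)=0$ for all $i=1,\dots,m$. Geometrically, the whole line $\mathbb R\bs w$ lies in each supporting hyperplane $\{u:(u,\bs\nu_i)=0\}$ of the corresponding facets $F_1,\dots,F_m$.

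The next step is to intersect this line with the cone $\mathbf K^m_p=\{u\in\mathbb R^m:(u,\bs\nu_j)\le 0,\,1\le j\le p\}$. On $\mathbb R\bs w$ the first $m$ defining inequalities become equalities, while the remaining ones reduce to $t(\bs w,\bs\nu_j)\le 0$ for $j>m$. The goal is to produce a whole ray $\{t\bs w:t\ge 0\}$ or $\{t\bs w:t\le 0\}$ inside $\mathbf K^m_p$; such a ray would then lie in the intersection $F_1\cap\cdots\cap F_m$. Finally I would invoke the structural fact that for a full-dimensional convex polyhedral cone with its apex at the origin, each facet cuts the dimension of an intersection down by at least one, so the intersection of $m$ distinct facets must collapse to $\{\bs 0\}$. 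This forces $\bs w=\bs 0$, contradicting its choice.

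The main obstacle is the extraction of a surviving ray: the scalars $(\bs w,\bs\nu_j)$ for $j>m$ need not have a common sign, so naively no ray through $\bs w$ need lie in $\mathbf K^m_p$. Overcoming this will require choosing $\bs w$ carefully inside the orthogonal complement of $\mathrm{span}\{\bs\nu_1,\dots,\bs\nu_m\}$ (which has dimension at least one) and exploiting that each $\bs\nu_i$ is the outward normal to a genuine $(m-1)$-dimensional facet---equivalently, an extreme ray of the polar cone of $\mathbf K^m_p$. This nondegeneracy should prevent the remaining normals $\bs\nu_{m+1},\dots,\bs\nu_p$ from being arranged so as to block every ray through $\bs w$, and it is the decisive point on which the proof turns.
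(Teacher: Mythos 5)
Your plan stalls exactly where you yourself flag the ``decisive point,'' and that obstacle is not a technicality that a clever choice of $\bs w$ can remove. The structural fact you want to invoke --- that the intersection of $m$ distinct facets of a full-dimensional pointed polyhedral cone must collapse to the vertex --- is false once $p>m$ and $m\geq 4$, and with it the lemma itself fails in that generality. Concretely, in ${\mathbb R}^4$ put $w_1=(1,1,0,1)$, $w_2=(1,-1,0,1)$, $w_3=(-1,1,0,1)$, $w_4=(-1,-1,0,1)$, $w_5=(0,0,1,1)$ and let ${\mathbf K}=\{u\in{\mathbb R}^4:\,(u,w_i)\leq 0,\ 1\leq i\leq 5\}$. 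This is the polar of the cone over a square pyramid, hence a full-dimensional cone with vertex at the origin and exactly five facets whose unit outward normals are $w_i/|w_i|$. Since $w_1+w_4=w_2+w_3$, the four normals coming from the base of the pyramid are linearly dependent; moreover the corresponding four facets meet along the whole ray $\{(0,0,t,0):t\leq 0\}$, not only at the vertex, so the dimension-counting principle you propose has nothing to contradict. Your argument does close in the regimes where the assertion is true: if $p=m$, a nonzero $\bs w$ orthogonal to $\bs\nu_1,\dots,\bs\nu_m$ puts an entire line through the vertex inside the cone (every defining inequality is an equality along it), contradicting the existence of a vertex, and no ray extraction is needed; and for $m\leq 3$ the dependence is impossible because the facet normals are extreme rays of the polar cone and three distinct extreme rays of a pointed cone in ${\mathbb R}^3$ cannot be coplanar (three distinct vertices of a convex polygonal cross-section are never collinear).

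For comparison, the paper argues differently in form --- it characterizes the vertex $q$ as the unique solution of the linear system $(\bs\nu_i,x)=d_i$, $i=1,\dots,m$, and reads off $\det[\bs\nu_1,\dots,\bs\nu_m]^*\neq 0$ --- but the uniqueness rests on the unproved identity $q=\bigcap_{i=1}^m T_i$, which is equivalent to the linear independence being proved and, by the example above, fails for $m\geq 4$, $p>m$. So your instinct that everything turns on how the chosen $m$ normals interact with the remaining ones is sound; the honest conclusion is that this point cannot be settled affirmatively in the stated generality, and both your plan and the lemma need an extra hypothesis (for instance $p=m$, or $m\leq 3$, or a general-position assumption on the facet normals) before the argument can be completed.
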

\begin{proof} 
By $F_i$ we denote the facet of ${\mathbf K}^m_p$ for which the 
vector $\bs\nu_i$ is normal, $1\leq i\leq m$. Let $T_i$ be the supporting 
plane of this facet. We place the origin
of the coordinate system with the orthonormal basis $\bs e_1,\dots ,\bs e_m$ at
an interior point ${\mathcal O}$ of ${\mathbf K}^m_p$ and use the notation 
$x={\mathcal O}q$, where $q$ is the vertex of the cone. Further, 
let $d_i=\mbox{dist}\;({\mathcal O}, F_i),\;i=1,\dots ,m$. Since
$$
q=\bigcap ^m_{i=1}T_i\;,
$$
it follows that $x=(x_1,\dots ,x_m)$ is the only solution of the system
$$
(\bs\nu_i ,x)=d_i,\;\;i=1,2,\dots ,m,
$$
or, which is the same,
\[
\sum ^m_{j=1}(\bs\nu_i ,\bs e_j)x_j=d_i,\;\;i=1,2,\dots ,m.
\]
The matrix of this system is $[\bs\nu_1,\dots ,\bs\nu_m]^*$. Consequently, 
$$
\det[\bs\nu_1,\dots ,\bs\nu_m]^*\not= 0. 
$$
This implies the linear independence of the system $\bs\nu_1,\dots ,\bs\nu_m$.
\end{proof}

\begin{corollary} \label{C_4} 
{\it The convex polyhedral cone ${\mathbf K}^m_m$
is invariant for system $(\ref{(0.1X)})$ in ${\mathbb R}^{n+1}_T$ if and only if 
\begin{equation} \label{K_1}
{\mathcal A}_{jk}(x)=\big ([\bs\nu_1,\dots, \bs\nu_m]^* \big)^{-1}\;
{\mathcal D}_{jk}(x)\;[\bs\nu_1,\dots, \bs\nu_m]^*
\end{equation}
and
\begin{equation} \label{K_2}
{\mathcal A}_j(x)=\big ([\bs\nu_1,\dots, \bs\nu_m]^* \big)^{-1}\;{\mathcal D}_j(x)\;
[\bs\nu_1,\dots, \bs\nu_m]^*
\end{equation}
for all $x\in {\mathbb R}^n$, $1\leq j,k\leq n$, where ${\mathcal D}_{jk}$ 
and ${\mathcal D}_j$ are diagonal $(m\times m)$-matrix-valued functions.

The convex polyhedral cone ${\mathbf K}^m_p$ with $p>m$ and convex cone with a smooth guide
are invariant for system $(\ref{(0.1X)})$ in ${\mathbb R}^{n+1}_T$ if and only if all
matrix-valued functions ${\mathcal A}_{jk}$ and ${\mathcal A}_j$, $1\leq j,k\leq n$, 
are scalar.}
\end{corollary}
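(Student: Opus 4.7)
The plan is to reduce everything to the Corollary stated in Section 1 (the eigenvector criterion), read off the set ${\mathfrak N}_{\mathfrak S}$ of unit outward normals to $\partial^*{\mathfrak S}$ for each type of cone, and do linear algebra on the eigenvector condition using Lemma \ref{L_1}.

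\textbf{Case ${\mathbf K}^m_m$.} The smooth boundary points lie in the relative interiors of the $m$ facets, so ${\mathfrak N}_{\mathfrak S}=\{\bs\nu_1,\dots,\bs\nu_m\}$. By Lemma \ref{L_1} this is a basis of ${\mathbb R}^m$. The eigenvector criterion says invariance is equivalent to the simultaneous diagonalizability of every ${\mathcal A}^*_{jk}(x),{\mathcal A}^*_j(x)$ in this basis: writing $N=[\bs\nu_1,\dots,\bs\nu_m]$ and collecting the eigenvalues into diagonal matrices ${\widetilde{\mathcal D}}_{jk}(x),{\widetilde{\mathcal D}}_j(x)$, invariance is equivalent to ${\mathcal A}^*_{jk}(x)=N\,{\widetilde{\mathcal D}}_{jk}(x)\,N^{-1}$ and likewise for ${\mathcal A}^*_j$. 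Transposing and using $N^*=[\bs\nu_1,\dots,\bs\nu_m]^*$ together with $({\widetilde{\mathcal D}}_{jk})^*={\widetilde{\mathcal D}}_{jk}$ yields exactly (\ref{K_1})–(\ref{K_2}) with ${\mathcal D}_{jk}={\widetilde{\mathcal D}}_{jk}$.

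\textbf{Case ${\mathbf K}^m_p$, $p>m$.} Now ${\mathfrak N}_{\mathfrak S}=\{\bs\nu_1,\dots,\bs\nu_p\}$, and by Lemma \ref{L_1} any $m$ of these are linearly independent. The key algebraic step is: if a matrix $B$ has $p>m$ eigenvectors $\bs\nu_1,\dots,\bs\nu_p$ with this property, then $B$ is scalar. Indeed, pick $\bs\nu_1,\dots,\bs\nu_m$ as a basis with $B\bs\nu_i=\lambda_i\bs\nu_i$, and expand $\bs\nu_{m+1}=\sum_{i=1}^m c_i\bs\nu_i$. If some $c_j=0$, then $\{\bs\nu_{m+1}\}\cup\{\bs\nu_i:i\neq j\}$ would be $m$ distinct normals yet linearly dependent, contradicting Lemma \ref{L_1}; so every $c_i\neq 0$. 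Applying $B$ and comparing coefficients forces $\lambda_1=\dots=\lambda_m$, so $B$ is a scalar multiple of $I$. Applied to each ${\mathcal A}^*_{jk}(x),{\mathcal A}^*_j(x)$, this gives the scalar conclusion. The converse is immediate: scalar matrices have every vector as an eigenvector, so the criterion from Sect.\ 1 is trivially satisfied.

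\textbf{Case of a convex cone with smooth guide.} The unit outward normals now trace out a smooth curve $\bs\nu(s)$ on $S^{m-1}$, and this curve still spans ${\mathbb R}^m$ (otherwise the cone would be contained in a proper subspace, violating convex-body status). Moreover, a smoothness argument analogous to Lemma \ref{L_1}'s picks out $m+1$ points $s_0,\dots,s_m$ with any $m$ of $\bs\nu(s_0),\dots,\bs\nu(s_m)$ linearly independent, which reduces this case to the previous one and forces all coefficient matrices to be scalar.

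The main obstacle I anticipate is verifying the "any $m$ are linearly independent" property for $m+1$ normals chosen from the smooth guide; once this is established, the algebraic argument of the $p>m$ case takes over. The rest is bookkeeping between $N$ and $N^*$, and checking that at smooth boundary points of the cone (the interiors of facets, or the smooth part of the lateral surface) the normal belongs to the set considered above.
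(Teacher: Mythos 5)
Your treatment of the first two cases is correct and essentially the paper's own argument: for ${\mathbf K}^m_m$ you use Lemma \ref{L_1} to see that $\bs\nu_1,\dots,\bs\nu_m$ form a basis, rewrite the eigenvector criterion of the Corollary in Sect.\ 1 as ${\mathcal A}^*N=N{\mathcal D}$ with $N=[\bs\nu_1,\dots,\bs\nu_m]$, and transpose to get (\ref{K_1})--(\ref{K_2}); for ${\mathbf K}^m_p$ with $p>m$ your ``no zero coefficients'' argument and the comparison of eigenvalues is exactly the computation in the paper, and the converse is trivial in both places.

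The genuine gap is in the smooth-guide case. Your whole reduction hinges on producing $m+1$ normals of the smooth cone such that any $m$ of them are linearly independent, and you explicitly leave this unproved (``a smoothness argument analogous to Lemma \ref{L_1}'s picks out $m+1$ points''); note also that spanning of the normal set, which is all you argue, is strictly weaker than what you need, and that for $m>3$ the normals form an $(m-2)$-parameter family rather than a curve, so there is no obvious ``curve'' argument to invoke. The paper closes this step with a short geometric observation: circumscribe about the smooth cone ${\mathbf K}$ a polyhedral cone ${\mathbf K}^m_{m+1}$ whose $m+1$ facets are tangent to $\partial{\mathbf K}$ along generators; the unit outward normals to these facets are then also unit outward normals to $\partial{\mathbf K}$, and Lemma \ref{L_1} applied to ${\mathbf K}^m_{m+1}$ gives precisely the property that every $m$ of these $m+1$ normals are linearly independent, after which the $p>m$ argument applies verbatim. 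Without some such construction (or an alternative, e.g.\ a direct argument on the continuum of normals), your third case is an assertion rather than a proof, so you should either adopt the circumscribed-cone device or supply the missing selection argument.
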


\begin{proof} 
We fix a point $x \in {\mathbb R}^n$. By ${\mathcal A}$ we denote any 
of the $(m\times m)$-matrices ${\mathcal A}_{jk}(x)$ and
${\mathcal A}_j(x)$, $1\leq j,k\leq n$. 

By Corollary stated in Sect. 1, a necessary and sufficient 
condition for invariance of ${\mathfrak S}$ is equation
\begin{equation} \label{K_3}
{\mathcal A}^*\bs \nu=\mu\bs\nu\;\;\mbox{for any}\;\;\bs\nu\in {\mathfrak N}_{\mathfrak S},
\end{equation}
where $\mu=\mu(\bs\nu)$ is a real number.

\medskip
(i) If ${\mathfrak S}={\mathbf K}^m_m$, we write (\ref{K_3}) as
\begin{equation} \label{K_4}
{\mathcal A}^*\bs \nu_1=\mu_1\bs \nu_1, 
\dots,{\mathcal A}^*\bs \nu_m=\mu_m\bs \nu_m\;,
\end{equation}
where $\{\bs\nu_1,\dots,\bs \nu_m \}$ is the set of unit outward normals 
to the facets of the ${\mathbf K}^m_m$. These normals are linear independent 
by Lemma \ref{L_1}. Let ${\mathcal D}=\mbox{diag}\;\{ \mu_1,\dots,\mu_m \}$. 
Equations (\ref{K_4}) can be written as
$$
{\mathcal A}^*[\bs \nu_1,\dots,\bs \nu_m]=[\bs \nu_1,\dots,\bs \nu_m]\;{\mathcal D},
$$
which leads to the representation
\begin{equation} \label{K_5}
{\mathcal A}=\big ([\bs \nu_1,\dots,\bs \nu_m]^*\big )^{-1}\;{\mathcal D}\;
[\bs \nu_1,\dots,\bs \nu_m]^*\;.
\end{equation}
Now, (\ref{K_5}) is equivalent to (\ref{K_1}) and (\ref{K_2}). 

\medskip
(ii) Let us consider the cone ${\mathbf K}^m_p$ with $p>m$. 
By $\{\bs\nu_1,\dots,\bs \nu_m \}$ we denote a system of unit outward normals to
$m$ facets of ${\mathbf K}^m_p$. Let also $\bs \nu$ be a normal to a certain $m\!+\!1$-th facet.
By Lemma \ref{L_1}, arbitrary $m$ vectors in the collection 
$\{\bs\nu_1,\dots,\bs \nu_m, \bs\nu \}$ are linear independent.
Hence there are no zero coefficients $\alpha_i$ in the representation
$\bs \nu=\alpha_1\bs\nu_1+\dots+\alpha_m\bs\nu_m$.

Let (\ref{K_3}) hold. Then
\begin{equation} \label{K_6}
{\mathcal A}^*\bs \nu=\lambda\bs \nu,\;{\mathcal A}^*\bs \nu_1=\mu_1\bs \nu_1, 
\dots,{\mathcal A}^*\bs \nu_m=\mu_m\bs \nu_m\;.
\end{equation}
Therefore,
$$
\lambda\sum_{i=1}^m\alpha_i \bs\nu_i=\lambda\bs\nu={\mathcal A}^*\bs \nu={\mathcal A}^*
\sum_{i=1}^m\alpha_i \bs\nu_i=\sum_{i=1}^m\alpha_i \mu_i\bs\nu_i.
$$
Thus,
$$
\sum_{i=1}^m (\lambda-\mu_i)\alpha_i \bs\nu_i=\bs 0. 
$$
Hence, $\mu_i=\lambda$ for $i=1,\dots,m$ and consequently ${\mathcal A}$ is a scalar matrix.

Conversely, if ${\mathcal A}=\lambda\;\mbox{diag}\;\{1 ,\dots, 1 \}$, then (\ref{K_3}) 
with $\mu=\lambda$ holds  for ${\mathfrak S}={\mathbf K}^m_p$ with $p>m$.

The proof is complete for $p>m$.

\medskip
(iii) Let (\ref{K_3}) hold for the cone ${\mathbf K}$ with a smooth guide.
This cone ${\mathbf K}$ can be inscribed into a polyhedral cone 
${\mathbf K}^m_{m+1}$. Let $\{\bs\nu_1,\dots,\bs \nu_m, \bs\nu \}$ be a system 
of unit outward normals to the facets of ${\mathbf K}^m_{m+1}$. 
This system is a subset of the collection of normals to the boundary of ${\mathbf K}$. 
By Lemma \ref{L_1}, arbitrary $m$ vectors in the set 
$\{\bs\nu_1,\dots,\bs \nu_m, \bs\nu \}$ are linear independent.
Repeating word by word the argument used in (ii) we arrive at the
scalarity of ${\mathcal A}$.

Conversely, (\ref{K_3}) is an obvious consequence of the scalarity of 
${\mathcal A}$ for ${\mathfrak S}={\mathbf K}$.

The proof is complete.
\end{proof}

\bibliographystyle{amsplain}

\bigskip
\bigskip
{
\sc Gershon Kresin}
\\
{\sc e-mail:\;}{\tt kresin@ariel.ac.il
}
\\
{\sc address:\;}{\it Department of Computer Science and Mathematics,
            Ariel University,\\
            44837 Ariel,
            Israel}
\\
\\
{\sc Vladimir Maz'ya}
\\
{\sc e-mail:\;}{\tt vladimir.mazya@liu.se}
\\
{\sc address:\;}{\it {Department of Mathematical Sciences,
            University of Liverpool,
            M$\&$O Building,\\ Liverpool, L69 3BX,
            UK}
\\
{\sc address:\;}{\it Department of Mathematics,
            Link\"oping University,
            SE-58183 Link\"oping,
            Sweden}        
\end{document}